\newcommand{\G}{\vec{G}}
\newcommand{\U}{\vec{U}}
\newcommand{\n}{\vec{n}}
\newcommand{\mf}[1]{#1}
\newcommand{\Ltwo}{\textbf{\emph{L}}^2(\Omega)}
\newcommand{\ltwo}{L^2(\Omega)}
\newcommand{\V}{\mathbf{{V}}}
\newcommand{\W}{\mathbf{{W}}}
\newcommand{\Uh}{\mathcal{U}^h}
\newcommand{\Wh}{\mathcal{W}^h}
\newcommand{\norm}[1]{\left\Vert #1 \right\Vert}
\renewcommand{\div}{\mathrm{div}}
\begin{document}

\title*{A staggered discontinuous Galerkin method for a class of nonlinear elliptic equations}

\author{Eric T. Chung, Ming Fai Lam and Chi Yeung Lam}
\institute{
Eric T. Chung \at Department of Mathematics, The Chinese University of Hong Kong, Hong Kong SAR, \email{tschung@math.cuhk.edu.hk}\\
Ming Fai Lam \at Department of Mathematics, The Chinese University of Hong Kong, Hong Kong SAR, \email{mflam@math.cuhk.edu.hk}\\
Chi Yeung Lam \at Department of Mathematics, The Chinese University of Hong Kong, Hong Kong SAR, \email{cylam@math.cuhk.edu.hk}
}

\maketitle

\abstract{ In this paper, we present a staggered discontinuous Galerkin (SDG) method for a class of nonlinear elliptic equations in two dimensions. The SDG methods
have some distinctive advantages, and
have been successfully applied to a wide range of problems including
Maxwell equations, acoustic wave equation,
elastodynamics and incompressible Navier-Stokes equations.
Among many advantages of the SDG methods, one can apply a local post-processing technique to the solution,
and obtain superconvergence.
We will analyze the stability of the method and derive a priori error estimates.
We solve the resulting nonlinear system using the Newton's method, and the numerical results confirm the theoretical rates of convergence and superconvergence.}

\keywords{
staggered discontinuous Galerkin method, nonlinear elliptic equation}

\section{Introduction}

Our aim of this paper is to develop a staggered discontinuous Galerkin (SDG) method for a class of nonlinear elliptic problems arising in,
for example, hyperpolarization effects in electrostatic analysis \cite{hu2012nonlinear},
nonlinear magnetic field problems \cite{heise1994analysis},
subsonic flow problems \cite{feistauer1986finite},
and heat conduction.

A detailed introduction to the SDG method is given by \cite{chung2009optimal,chung2006optimal}.
This class of methods has been successfully applied to a wide range of problems including
the Maxwell equation \cite{chung2011staggered,chung2013convergence},
acoustic wave equation \cite{chung2009optimal},
elastic equations \cite{chung2015staggered,lee2016analysis},
and incompressible Navier-Stokes equations \cite{cheung2015staggered}.
In these applications, the approximate solutions obtain some nice properties such as
energy conservation, low dispersion error and mass conservation.
Recently, a connection between the SDG method and the hybridizable discontinuous Galerkin (HDG) method is obtained \cite{chung2014staggered,chung2016staggered}. From this perspective, the SDG method acquires some new properties, such as postprocessing and superconvergence properties, from the HDG method \cite{cockburn2009superconvergent}.
We remark that numerical methods based on staggered meshes are important in many applications,
see \cite{virieux1986p,tavelli2014staggered}.

To begin with, we let $\Omega\subset\mathbf{R}^2$ be a bounded and simply connected domain with polygonal boundary $\Gamma$.
Also, we let the coefficient $\varrho : \mathbf{R}^2 \to \mathbf{R}$ be a $L^{\infty}$ function satisfying certain conditions (will be specified).
Then, for a given $f\in L^2(\Omega)$ we seek $u \in H^1_0(\Omega)$ such that
\begin{eqnarray}
- \div \left(\varrho(\nabla u(x))\nabla{u(x)}\right) = f(x) \text{ in } \, \Omega, \mbox{ and }
u(x)  =  0 \quad \text {on } \, \Gamma, \label{pblm}
\end{eqnarray}
where $\div$ is the usual divergence operator.

This paper is organized as follows. In Section 2,
we will construct the SDG method.
In Section 3, we will discuss the implementation of the scheme.
In Section 4, we will prove stability estimates and an a priori error estimate of our scheme.
Finally, in Section 5, we will numerically show the rate of convergence of our method.
Throughout this paper, we use $C$ to denote a generic positive constant, which is independent of the mesh size.

\section{The SDG formulation}
We introduce new variables, the gradient $\mf{\G := \nabla u}$ and the flux $\mf{\U := \rho(\G)\G}$.
Then the problem \eqref{pblm} can be recasted as the following problem in $\Omega$:
Find $(\U, \G, u)$ such that,
\begin{equation}
\begin{gathered}
\G = \nabla u,\quad \U = \rho(\G)\G, \quad -\div\U = f \quad \mbox{ in } \Omega, \\
u = 0 \quad \mbox{ on }\Gamma.
\end{gathered}
\end{equation}

Next we describe the staggered mesh.
Assume $\Omega$ is triangulated by a family of triangles with no hanging nodes, namely, the initial triangulation $\mathcal{T}_u$.
The triangles in $\mathcal{T}_u$ are called the \emph{first-type macro element}.
We denote the set of all edges and all interior edges of $\mathcal{T}_u$ by $\mathcal{F}_u$ and $\mathcal{F}^0_{u}$, respectively.
Then we choose an interior point $\nu$ in each first-type macro element.
We denote the first-type macro element corresponding to $\nu$ by $\mathcal{S}(\nu)$.
By connecting each of these interior points to the three vertices of the triangle,
we subdivide each triangle into three subtriangles.
We denote the triangulation containing all these subtriangles by $\mathcal{T}$ and assume it is shape-regular.
We denote the set of all new edges in this subdivision process by $\mathcal{F}_p$.
Also, we denote the set of all edges
and the set of all interior edges by $\mathcal{F} := \mathcal{F}_u \cup \mathcal{F}_p$
and $\mathcal{F}_0 := \mathcal{F}^0_{u} \cup \mathcal{F}_p$, respectively.
For each interior edge $e\in \mathcal{F}^0_{u}$, there are two triangles $\tau_1, \tau_2 \in \mathcal{T}$
such that $e = \tau_1 \cap \tau_2$.
We denote the union $\tau_1 \cup \tau_2$ by $\mathcal{R}(e)$.
Also, for each boundary edge $e$, we denote the only triangle having $e$ as an edge by $\mathcal{R}(e)$.
These elements $\mathcal{R}(e)$ are called the \emph{second-type macro element}.
In Fig.~\ref{fig1}, we illustrate two first-type marco elements and a second-type marco element obtained from the subdividing process on two neighboring initial triangles.

\begin{figure}[h]
\centering
\includegraphics[width=3in]{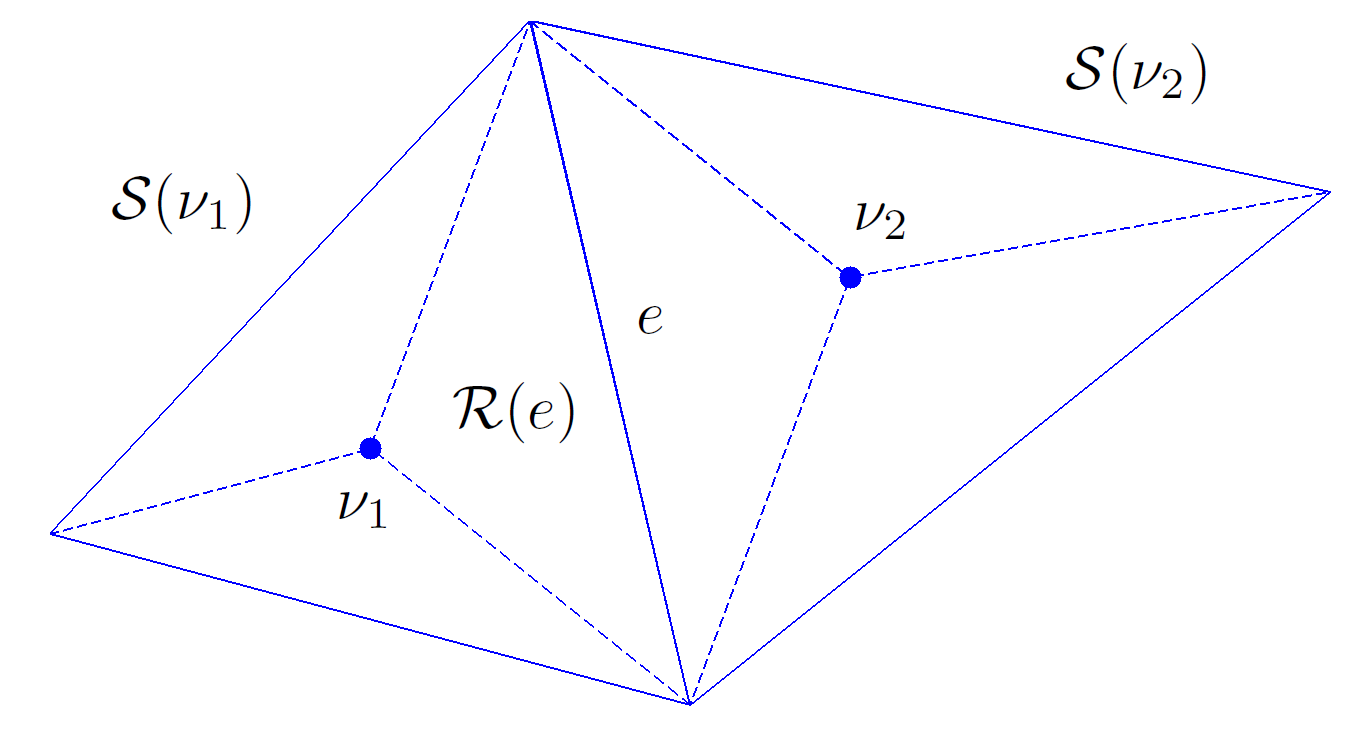}
\caption{An illustration of the triangulation $\mathcal{T}$.}\label{fig1}
\end{figure}

For a boundary edge $e$, we define $\n_e$ to be the unit normal vector pointing outside $\Omega$.
Otherwise, $\n_e$ is one of the two possible unit normal vectors of $e$.
When it is clear which edge is being considered, we will simply use $\n$ instead of $\n_e$.

Next, we describe the finite element spaces we use in our formulation.
Let $k \geq 0$ be a non-negative integer.
For each triangle $\tau  \in \mathcal{T}$,
we denote the space of polynomials on $\tau$ with degree at most $k$ by $P^k(\tau)$.
Then we define the \emph{{locally $H^1(\Omega)$-conforming finite element}} as
$$
\Uh:=\{v : v|_{\tau}\in P^k(\tau), \forall\tau\in\mathcal{T};v\text{ is continuous across}\;e\in\mathcal{F}^0_{u}\;;\;v|_{\partial\Omega}=0\},
$$
and the \textit{locally $H(\mathrm{div};\Omega)$-conforming finite element space} as
\begin{equation*}
\begin{gathered}
\Wh:=\{\V: \V|_{\tau}\in P^k(\tau)^2,\forall\tau\in\mathcal{T};  \text{the normal component }\V\cdot \n_e \\ \text{ across}\;e\in\mathcal{F}_p \text{ is continuous}\}.
\end{gathered}
\end{equation*}

Following \cite{chung2009optimal,chung2006optimal}, we consider the following formulation:
Find $(\U_h, \G_h, u_h) \in \Wh\times\Wh\times\Uh$
such that for any first-type element $\mathcal{S}(\nu)$ and second-type element $\mathcal{R}(e)$, any $(\V_h, \W_h, v_h) \in \Wh\times\Wh\times\Uh$,
\begin{equation}
\begin{gathered}
\int_{\mathcal{S}(\nu)} \G_h \cdot \V_h\, dx + \int_{\mathcal{S}(\nu)} u_h\, \div_h \V_h\, dx - \int_{\partial \mathcal{S}(\nu)} \G_h \left(\V_h\cdot \n\right) \, ds = 0, \\
\int_{\mathcal{S}(\nu)} \U_h \cdot \W_h\, dx - \int_{\mathcal{S}(\nu)}\rho(\G_h) \G_h \cdot \W_h\, dx = 0, \\
\int_{\mathcal{R}(e)} \U_h \cdot \nabla_h v_h \, dx - \int_{\partial \mathcal{R}(e)} (\U_h \cdot \n) v_h \, ds = \int_{\mathcal{R}(e)} f\cdot v_h\, \mf{dx},
\end{gathered}\label{sdgform}
\end{equation}
where $\nabla_h$ and $\div_h$ are the elementwise gradient and divergence operators, respectively. Besides, $\n$ denotes outward normals on $\mathcal{S}(\nu)$ or $\mathcal{R}(e)$ depending on the context.

We define the jump operator $[\cdot]$ as follows. For $e\in\mathcal{F}_{p}$, if $\tau_1, \tau_2 \in \mathcal{T}$
such that $e = \tau_1\cap \tau_2$ and $\n_e$ is pointing from $\tau_1$ to $\tau_2$,
then $$[v] := v|_{\tau_1} - v|_{\tau_2}.$$
For $e\in\mathcal{F}^0_{u}$, if $\tau_1, \tau_2 \in \mathcal{T}$
such that $e = \tau_1\cap \tau_2$ and $\n_e$ is pointing from $\tau_1$ to $\tau_2$,
then $$[\V\cdot \n_e] := \V|_{\tau_1}\cdot \n_e - \V|_{\tau_2}\cdot \n_e.$$
We also introduce two bilinear forms,
\begin{eqnarray*}
b_h(\V,v) := \int_{\Omega} \textbf{\emph{V}}\cdot \nabla_{\tau} v\;dx-\sum_{e \in \mathcal{F}_p}\int_e \textbf{\emph{V}} \cdot \textbf{\emph{n}}[v]\;d\sigma, \\
b_h^{\ast}(v,\V) := -\int_{\Omega} v\nabla_{\tau} \cdot \textbf{\emph{V}} \;dx+\sum_{e \in \mathcal{F}_u^0}\int_e v[\textbf{\emph{V}} \cdot \textbf{\emph{n}}]\;d\sigma,
\end{eqnarray*}
for $v \in \Uh, \textbf{\emph{V}} \in \Wh.$
Summing the equations in \eqref{sdgform} on $\mathcal{S}(\nu)$ and $\mathcal{R}(e)$, respectively,
we can recast \eqref{sdgform} into:
find $(\U_h, \G_h, u_h) \in \Wh\times\Wh\times\Uh$ such that,
\begin{equation}
	 \int_{\Omega} \G_h \cdot \V_h\, dx -b^{\ast}_h(u_h,\V_h)  =  0, \quad \text{ for any } \V_h \in \Wh; \label{sdg1}
\end{equation}
\begin{equation}
	\int_{\Omega} \U_h \cdot \W_h\,dx - \int_{\Omega} \rho(\G_h)\G_h \cdot \W_h\, dx  =  0, \quad \text{ for any } \W_h \in \Wh;\label{sdg2}
\end{equation}
\begin{equation}
	b_h(\U_h,v_h) =  \int_{\Omega} f v_h\, dx, \quad \text{ for any } v_h \in \Uh.\label{sdg3}
\end{equation}
This completes the definition of our SDG method.

\section{Implementation}
In this section we will discuss the implementation detail of our SDG method.
First of all we fix a basis $\lbrace \phi_i \rbrace_{i=1}^{N_u}$ for $\Uh$ and
$\lbrace \psi_i \rbrace_{\mf{i}=1}^{N_w}$ for $\Wh$, and write $u_h = \sum_i (\widehat{u}_h)_i \phi_i$,
$\G_h = \sum_i (\widehat{\G}_h)_i \psi_i$
and $\U_h = \sum_i (\widehat{\U}_h)_i \psi_i$,
where $\widehat{{u}}_h$, $\widehat{\G}_h$ and $\widehat{\U}_h$ are $N_u\times 1$, $N_w \times 1$ and $N_w \times 1$ vectors respectively.
Next, we define the mass matrix $M_h$ and the matrix $B_h$ by
$
(M_h)_{ij}  :=  \int_{\Omega} \psi_j \cdot \psi_i \, dx,
(B_h)_{ij}  :=  b_h(\psi_j, \phi_i), \mbox{ and}
(f_h)_{i}   :=  \int f v_i\, dx,
$
respectively.
Then we rewrite \eqref{sdg1}--\eqref{sdg3} as the following system:
\begin{equation}
M_h\widehat{\G}_h-B^T_h \widehat{u}_h  =  0, \label{system1}
\end{equation}
\begin{equation}
M_h\widehat{\U}_h  =  F(\widehat{\G}_h), \label{system2}
\end{equation}
\begin{equation}
B_h\widehat{\U}_h  =  \widehat{f}_h, \label{system3}
\end{equation}
where $F(\widehat{\G}_h)$ is a $N_w \times 1$ vector given by
$F(\widehat{\G}_h)_i:=\left(\rho({\G}_h){\G}_h,\psi_i\right)_{L^2(\Omega)}$.
Eliminating $\widehat{\textbf{\emph{U}}}_h$ from \eqref{system1}-\eqref{system3}, we obtain
\begin{eqnarray}\label{nsystem}
M_h\widehat{\G}_h-B^T_h \widehat{u}_h & = & 0,\\
B_h M_h^{-1}F(\widehat{\G}_h) & = & \widehat{f}_h.
\end{eqnarray}
Here $F$ is not a linear function in general. Hence we use Newton's method to solve this system.
Write $\widehat{\mathbf{x}}_h := ( \widehat{\G}_h, \widehat{u}_h )^T$ and
$H(\widehat{\mathbf{ x}}_h) := \left( M_h \widehat{\G}_h-B_h^T \widehat{u}_h,  B_h M_h^{-1}F(\widehat{\G}_h)-\mf{\widehat{f}}_h \right)^T$.
%
%
%
The Jacobian matrix of $H$ is given by
\begin{equation*}
  \begin{split}
    J(\mf{\widehat{\bf x}}_h):=\left(
    \begin{matrix}
         M_h & -B_h^T \\
         B_hM_h^{-1}F'(\widehat{\G}_h) & 0
    \end{matrix}\right)
  \end{split},
\end{equation*}
where $F'(\widehat{\G}_h)$ is the derivative with respect to $\widehat{\G}_h$, which is given by
\begin{equation*}
    F'(\widehat{\G}_h)_{ij}= \left(\rho(\G_h)\psi_j,\psi_i \right)+ \left((\nabla\rho(\G_h)\cdot\psi_j)\G_h,\psi_i \right).
\end{equation*}
Given an initial guess $\mf{\widehat{\mathbf{x}}}_h^0$, we repeatedly update $\mf{\widehat{\mathbf{x}}}_h^n$ by
\begin{equation}
\widehat{{\bf x}}_h^{n+1}= \widehat{{\bf x}}_h^n-[J(\widehat{{\bf x}}_h^n)]^{-1}H({\widehat{\bf x}}_h^n),
\end{equation}
until the successive error $\norm{u_h^{n+1} - u_h^{n}}$ is less than a given tolerance $\delta$.

\section{Stability and convergence of the SDG method}
We begin with some results from the SDG method for other problems.
We define the discrete $L^2$-norm $\Vert \cdot\Vert _X$ and the discrete $H^1$-norm $\Vert \cdot\Vert _Z$ for any $v\in \Uh$ by
\begin{equation}
  \Vert v\Vert ^2_X := \int_\Omega v^2\;dx+\sum_{e\in\mathcal{F}^0_u}h_e\int_e v^2\;d\sigma \quad \mbox{ and }
\end{equation}
\begin{equation}
  \Vert v\Vert ^2_Z := \int_\Omega |\nabla_h v|^2\;dx+\sum_{e\in\mathcal{F}_p}h^{-1}_e\int_e [v]^2\;d\sigma,
\end{equation}
respectively.
We also define the discrete discrete $L^2$-norm $\Vert \cdot\Vert _{X'}$
and the discrete $H^1$-norm $\Vert \cdot\Vert _{Z'}$ for any $\V \in \Wh$ by
\begin{equation}
    \Vert \V\Vert ^2_{X'} = \int_\Omega |\V|^2\;dx+\sum_{e\in\mathcal{F}_p}h_e\int_e(\V\cdot \textbf{\emph{n}})^2\;d\sigma,
\end{equation}
\begin{equation}
    \Vert \V\Vert ^2_{Z'} = \int_\Omega (\nabla\cdot \V)^2\;dx+\sum_{e\in\mathcal{F}^0_u}h^{-1}_e\int_e[\V\cdot \textbf{\emph{n}}]^2\;d\sigma.
\end{equation}

Then we recall some nice properties of the bilinear forms $b_h$ and $b_h^*$ introduced in previous section.
According to Lemma 2.4 of \cite{chung2009optimal},
\begin{equation} \label{beq}
  b_h(\V,v) = b_h^{\ast}(v,\V),\quad\forall(v,\V)\in\Uh\times\Wh.
\end{equation}
and the following inequality holds:
\begin{equation}\label{bineq}
  b_h(\V,v) \leq \Vert v\Vert _Z\Vert \V\Vert _{X'} ,\quad\forall(v,\V)\in\Uh\times\Wh.
\end{equation}
From the definition of $\norm{\cdot}_X$ and $\norm{\cdot}_{X'}$ it is clear that for any $v\in\Uh$ and $\V\in\Wh$,
\begin{eqnarray}
\norm{v}_{L^2(\Omega)} \leq \norm{v}_X, \label{L2leqX}\\
\norm{\V}_{L^2(\Omega)^2} \leq \norm{\V}_{X'} \label{L2leqX'}.
\end{eqnarray}
Using the argument in the proof of Lemma 2.1 in Arnold~\cite{arnold1982interior}, we have the following discrete Poincar\'e inequality.
\begin{lemma}\label{L2leqZnorm}
For any $v\in\Uh$, there is a positive constant $C$ independent of the mesh size $h$ such that
\begin{equation}
  \Vert v\Vert _{L^2(\Omega)}\leq C\Vert v\Vert_{Z}.
  \end{equation}
\end{lemma}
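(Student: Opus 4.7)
The plan is to imitate Arnold's broken Poincar\'e argument: construct a conforming approximation $v_c \in H^1_0(\Omega)$ that is $L^2$-close to $v$, and then invoke the classical Poincar\'e inequality on $v_c$. The essential feature of $\Uh$ I would exploit is that its elements are already single-valued across every edge of $\mathcal{F}_u^0$ and vanish on $\partial\Omega$; discontinuities of $v$ can occur only across the subdivision edges $\mathcal{F}_p$, which are precisely the edges penalised in the last term of $\|\cdot\|_Z$.

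First, I would introduce an Oswald-type averaging operator mapping $\Uh$ into its continuous $H^1_0$-conforming subspace. At every Lagrange node $x$ interior to $\Omega$, I set $v_c(x)$ equal to the arithmetic average of the values $v|_\tau(x)$ taken over all triangles $\tau \in \mathcal{T}$ containing $x$; at nodes lying on $\partial\Omega$ I set $v_c(x) = 0$. Since $v$ is already single-valued at nodes on $\mathcal{F}_u^0$ and at boundary nodes, $v$ and $v_c$ can differ only at nodes situated on edges of $\mathcal{F}_p$, and by construction $v_c \in \Uh \cap H^1_0(\Omega)$.

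Next, using a standard local scaling argument on each triangle $\tau \in \mathcal{T}$---reference-element transformation, finite-dimensional norm equivalence on $P^k(\tau)$, and the trace inequality---one bounds the nodal discrepancies $v|_\tau - v_c$ by the $L^2$-norms of the jumps $[v]$ across adjacent $\mathcal{F}_p$-edges, yielding
\begin{equation*}
\|v - v_c\|_{L^2(\Omega)}^2 \leq C \sum_{e \in \mathcal{F}_p} h_e \int_e [v]^2 \, d\sigma \leq C h^2 \|v\|_Z^2,
\end{equation*}
and analogously $\|\nabla_h(v - v_c)\|_{L^2(\Omega)}^2 \leq C \sum_{e \in \mathcal{F}_p} h_e^{-1} \int_e [v]^2 \, d\sigma \leq C \|v\|_Z^2$, so that $\|\nabla v_c\|_{L^2(\Omega)} \leq C \|v\|_Z$. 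The classical Poincar\'e inequality on $H^1_0(\Omega)$ then gives $\|v_c\|_{L^2(\Omega)} \leq C \|\nabla v_c\|_{L^2(\Omega)} \leq C \|v\|_Z$, and a triangle inequality completes the estimate $\|v\|_{L^2(\Omega)} \leq \|v - v_c\|_{L^2(\Omega)} + \|v_c\|_{L^2(\Omega)} \leq C \|v\|_Z$.

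The only real subtlety I anticipate is verifying that $v_c$ belongs to $H^1_0(\Omega)$, which hinges precisely on the staggered continuity built into $\Uh$---continuity across $\mathcal{F}_u^0$ and vanishing trace on $\partial\Omega$. Once that structural fact is in place, the remaining inequalities are the routine jump-controlled scaling bounds for the averaging operator, and the correct $h$-scaling is furnished by the $h_e^{-1}$ weight appearing in $\|\cdot\|_Z$.
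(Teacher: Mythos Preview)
Your proof is correct, but note that the construction you describe is not actually Arnold's original argument, even though you label it as such. The paper gives no proof of its own and simply invokes Lemma~2.1 of Arnold (1982), whose proof is a \emph{duality} argument: solve $-\Delta\phi=v$ in $H^1_0(\Omega)$, integrate by parts on each $\tau\in\mathcal{T}$ to obtain
\[
\|v\|_{L^2(\Omega)}^2=\int_\Omega\nabla\phi\cdot\nabla_h v\,dx-\sum_{e\in\mathcal{F}_p}\int_e(\nabla\phi\cdot\vec{n})\,[v]\,d\sigma
\]
(the $\mathcal{F}_u^0$ and boundary contributions vanish by the continuity and zero-trace built into $\mathcal{U}^h$), bound the right-hand side by $C\|\phi\|_{H^2(\Omega)}\|v\|_Z$ via Cauchy--Schwarz and a scaled trace inequality, and conclude from elliptic regularity $\|\phi\|_{H^2}\le C\|v\|_{L^2}$. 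Your route---an Oswald/Karakashian--Pascal nodal-averaging operator into the $H^1_0$-conforming subspace---is a genuinely different and equally standard technique. Its advantage is that it requires no $H^2$-regularity of the dual problem (hence no implicit convexity or smoothness assumption on $\Omega$) and yields as a by-product the approximation bounds $\|v-v_c\|_{L^2}\le Ch\|v\|_Z$ and $\|\nabla v_c\|_{L^2}\le C\|v\|_Z$, which are often useful elsewhere. Arnold's duality argument, on the other hand, is shorter and sidesteps the nodal bookkeeping at vertices shared by edges of both $\mathcal{F}_u$ and $\mathcal{F}_p$.
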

Moreover, the following inf-sup conditions holds for the bilinear forms $b_h$ and $b_h^*$.
\begin{lemma}\label{binfsup}
There is a constant $C$ independent of meshsize $h$ such that
\begin{eqnarray*}
    \inf_{\V\in\Wh}\sup_{v\in\Uh}\frac{b_h^{\ast}(v,\V)}{\Vert v\Vert _X\Vert \V\Vert _{Z'}}\geq C, \\
    \inf_{v\in\Uh}\sup_{\V\in\Wh}\frac{b_h(\V,v)}{\Vert v\Vert _{Z}\Vert \V\Vert _{X'}}\geq C.
\end{eqnarray*}
\end{lemma}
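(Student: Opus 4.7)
My plan is to prove both inf-sup conditions constructively, in the spirit of the analogous proofs for the linear SDG method in \cite{chung2009optimal,chung2006optimal}. The staggered structure of $\Uh$ and $\Wh$ is tailor-made for such arguments: jumps of $v\in\Uh$ on edges $e\in\mathcal{F}_p$ are dual to normal continuity of $\V\in\Wh$ on those same edges, and the roles of the two mesh families are swapped on $\mathcal{F}_u^0$. For any given function in one space, one can therefore design a test function in the other whose pairing recovers every contribution to the target discrete norm.

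For the second inf-sup, fix $v\in\Uh$ and build $\V\in\Wh$ subtriangle by subtriangle. On each $\tau\in\mathcal{T}$, prescribe (i) interior moments of $\V|_\tau$ against $P^{k-1}(\tau)^2$ to agree with those of $\nabla v|_\tau$, so that the bulk piece of $b_h(\V,v)$ contributes $\int_\Omega |\nabla_h v|^2\,dx$; (ii) the normal trace $\V\cdot\n_e$ on each $e\in\mathcal{F}_p$ equal to $-h_e^{-1}$ times the $L^2(e)$-projection of $[v]$ onto $P^k(e)$, taken identically from the two adjacent subtriangles so that $\V\in\Wh$, so that the jump piece contributes $\sum_{e\in\mathcal{F}_p}h_e^{-1}\int_e [v]^2\,d\sigma$; and (iii) any remaining local degrees of freedom (including normal traces on $e\in\mathcal{F}_u^0$) chosen to make the local polynomial problem well-posed with $h$-independent constants. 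Summing the local contributions then gives $b_h(\V,v)\geq C\norm{v}_Z^2$, while an inverse/trace scaling argument on each $\tau$ yields $\norm{\V}_{X'}\leq C\norm{v}_Z$, which together deliver the desired inf-sup constant.

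The first inf-sup is handled dually. Given $\V\in\Wh$, construct $v\in\Uh$ whose $L^2(\tau)$-projection onto $P^{k-1}(\tau)$ recovers $-\nabla\cdot\V|_\tau$ and whose trace on each $e\in\mathcal{F}_u^0$ equals $h_e$ times the $L^2(e)$-projection of $[\V\cdot\n_e]$, matched on both sides so that $v$ is continuous across $\mathcal{F}_u^0$ and with vanishing trace on $\Gamma$. This yields $b_h^*(v,\V)\geq C\norm{\V}_{Z'}^2$ and $\norm{v}_X\leq C\norm{\V}_{Z'}$. The main obstacle, and what really uses the staggered geometry, is verifying that the local polynomial problems for $\V|_\tau$ and $v|_\tau$ are uniquely solvable with stability constants independent of $h$: one must check that the prescribed mixture of interior moments and edge traces forms a valid set of degrees of freedom on $P^k(\tau)^2$ (resp.\ $P^k(\tau)$). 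This reduces by affine scaling to a finite-dimensional unisolvence statement on a reference subtriangle; once that polynomial-level fact is established, summation over $\mathcal{T}$ closes both inequalities.
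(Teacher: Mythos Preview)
The paper does not actually supply a proof of this lemma: it is stated as a recalled fact about the SDG bilinear forms, with the implicit justification being the prior work \cite{chung2009optimal,chung2006optimal}. So there is no ``paper's own proof'' to compare against beyond that citation.

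Your constructive strategy is precisely the one used in those references: for each $v\in\Uh$ (resp.\ $\V\in\Wh$) one builds a companion $\V\in\Wh$ (resp.\ $v\in\Uh$) element by element, prescribing interior moments to reproduce the volume term of the target norm and edge traces on the appropriate staggered skeleton ($\mathcal{F}_p$ for the $Z$-norm, $\mathcal{F}_u^0$ for the $Z'$-norm) to reproduce the jump term, then closes with a scaling argument on the reference element. In that sense your proposal is exactly aligned with what the paper invokes, and the sketch is sound. The only point worth tightening in a full write-up is the unisolvence step you flag at the end: one must verify that on a single subtriangle the combination of $P^{k-1}$ interior moments and $P^k$ normal traces on the designated edges is a genuine set of degrees of freedom for $P^k(\tau)^2$ (and dually for $P^k(\tau)$); this is where the precise count and the shape-regularity assumption enter, and it is the part of the argument that the cited papers spell out carefully.
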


Next, we impose some \mf{restrictions} on the coefficient $\rho$.
We assume $\rho$ is bounded below by a positive number $\rho_0$.
Moreover, we follow Bustinza and Gatica~\cite{bustinza2004local}
to require $\rho(\W)\W$ to be \emph{strongly monotone}. In order words, there is a positive constant $C$ independent of $\V,\W\in\Ltwo^2$ such that
\begin{eqnarray}
\int_{\Omega}[\rho(\W)\W-\rho(\V)\V]\cdot(\W-\V)\geq C \left\Vert \W-\V \right\Vert _{L^2(\Omega)^2}^2; \label{strongmono}
\end{eqnarray}
We also require $\rho(\W)\W$ to be \emph{Lipschitz continuous}. In order words, there is a positive constant $C$ independent of $\V,\W\in\Ltwo^2$ such that
\begin{eqnarray}
\left\Vert\rho(\W)\W-\rho(\V)\V\right\Vert _{L^2(\Omega)^2}^2\leq C\left\Vert \W-\V \right\Vert_{L^2(\Omega)^2}^2. \label{lipschitz}
\end{eqnarray}

We will also consider the interpolants
$\mathcal{I}:H^1(\Omega)\rightarrow\Uh$ and $\mathcal{J}:H(\text{div};\Omega)\rightarrow\Wh$
discussed in \cite{chung2009optimal}, which is characterized by
\begin{eqnarray}
	b^{\ast}_h(\mathcal{I}u-u,\V) &=& 0, \quad\quad\forall u\in H^1(\Omega), \V \in \Wh, \label{bintepI}\\
	b_h(\mathcal{J}\U-\U,v) &=& 0, \quad\quad\forall \U\in H(\text{div};\Omega), v \in \Uh.
\end{eqnarray}
It is shown that
for any $v\in H^{k+1}(\Omega)$ and $\V\in H^{k+1}(\Omega)^2$, we have
    \begin{eqnarray}\label{Ierror}
      \Vert v-\mathcal{I}v\Vert _{\ltwo}\leq Ch^{k+1}\Vert v\Vert _{H^{k+1}(\Omega)}, \\
      \label{Jerror}
      \Vert \V-\mathcal{J}\V\Vert _{\Ltwo}\leq Ch^{k+1}\Vert \V\Vert _{H^{k+1}(\Omega)^2}.
    \end{eqnarray}

%
%
%
\begin{theorem}
    Let $(u,\G,\U)\in H^{k+1}(\Omega)\times H^{k+1}(\Omega)^2\times H^{k+1}(\Omega)^2$ be the solution of the original problem and $(u_h,\G_h,\U_h)$ be the solution of the SDG scheme \eqref{sdg1}--\eqref{sdg3}. Then we have the stability estimate
\begin{eqnarray}\label{stability}
      \Vert u_h\Vert _{L^2(\Omega)} + \norm{\mf{\U_h}}_{L^2(\Omega)^2} + \norm{ \mf{\G_h} }_{L^2(\Omega)^2} \leq C\Vert f\Vert _{L^2(\Omega)},
\end{eqnarray}
and the convergence estimates
\begin{eqnarray}
    \begin{split}
      \norm{u-u_h}_{L^2(\Omega)} + \norm{\U-\U_h}_{L^2(\Omega)^2} +\norm{ \G - \G_h }_{L^2(\Omega)^2} \\\leq Ch^{k+1} \left( \norm{u}_{H^{k+1}(\Omega)}+ \norm{\G}_{H^{k+1}(\Omega)^2} \right).
    \end{split}
\end{eqnarray}
\end{theorem}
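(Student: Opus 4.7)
The plan is to separate the analysis into (i) stability via monotonicity plus an inf-sup-based control of $\|u_h\|$, and (ii) convergence via an error identity that combines the three error equations in a way that exploits the staggered duality $b_h = b_h^{\ast}$ and the orthogonality of the interpolants.

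For stability I would first take $\V_h = \U_h$ in \eqref{sdg1}, $\W_h = \G_h$ in \eqref{sdg2} and $v_h = u_h$ in \eqref{sdg3}. Combining the three identities and using \eqref{beq} to identify $b_h^{\ast}(u_h,\U_h)$ with $b_h(\U_h,u_h)$ collapses them into
\begin{equation*}
\int_{\Omega} \rho(\G_h)\G_h \cdot \G_h \, dx = \int_{\Omega} f u_h \, dx.
\end{equation*}
Applying \eqref{strongmono} with $\V = \mathbf{0}$ yields $\|\G_h\|_{L^2(\Omega)^2}^2 \leq C\|f\|_{L^2(\Omega)}\|u_h\|_{L^2(\Omega)}$. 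To bound $\|u_h\|$ by $\|\G_h\|$, equation \eqref{sdg1} also gives $b_h^{\ast}(u_h,\V_h) = \int_{\Omega} \G_h \cdot \V_h\,dx \leq \|\G_h\|_{L^2(\Omega)^2}\|\V_h\|_{X'}$ for any $\V_h\in\Wh$; the second inf-sup of Lemma~\ref{binfsup} together with Lemma~\ref{L2leqZnorm} then yields $\|u_h\|_{L^2(\Omega)} \leq C\|u_h\|_Z \leq C\|\G_h\|_{L^2(\Omega)^2}$. Chaining the two bounds gives the $\G_h$- and $u_h$-parts of \eqref{stability}; testing \eqref{sdg2} with $\W_h = \U_h$ and applying \eqref{lipschitz} with $\V = \mathbf{0}$ closes it with the $\U_h$-part.

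For convergence, the exact triple $(u,\G,\U)$ satisfies \eqref{sdg1}--\eqref{sdg3} by elementwise integration by parts, using $u|_\Gamma = 0$, continuity of $u$ across $\mathcal{F}_u^0$, and continuity of $\U\cdot\n$ across $\mathcal{F}_p$. Subtracting from the discrete equations and using \eqref{bintepI} with its $\mathcal{J}$-analogue, the errors $e_u := \mathcal{I}u - u_h$, $e_\G := \mathcal{J}\G - \G_h$ and $e_\U := \mathcal{J}\U - \U_h$ satisfy
\begin{equation*}
\int_{\Omega} e_\G \cdot \V_h\,dx - b_h^{\ast}(e_u, \V_h) = -\int_{\Omega} (\G - \mathcal{J}\G)\cdot \V_h\,dx,
\end{equation*}
\begin{equation*}
\int_{\Omega} e_\U \cdot \W_h\,dx + \int_{\Omega}(\U - \mathcal{J}\U)\cdot \W_h\,dx = \int_{\Omega}[\rho(\G)\G - \rho(\G_h)\G_h]\cdot \W_h\,dx,
\end{equation*}
and $b_h(e_\U, v_h) = 0$. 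The key step is the test choice $\V_h = e_\U$, $\W_h = e_\G$: the cancellation $b_h^{\ast}(e_u,e_\U) = b_h(e_\U,e_u) = 0$ removes the $e_u$-term, and writing $e_\G = (\mathcal{J}\G - \G) + (\G - \G_h)$ produces an identity
\begin{equation*}
\int_{\Omega}[\rho(\G)\G - \rho(\G_h)\G_h]\cdot (\G - \G_h)\,dx = R,
\end{equation*}
with $R$ depending only on interpolation errors tested against $e_\G$ and $e_\U$ plus a Lipschitz remainder. An auxiliary energy step ($\W_h = e_\U$ in the flux error equation together with \eqref{lipschitz}) yields $\|e_\U\|_{L^2(\Omega)^2} \leq C(\|\G - \G_h\|_{L^2(\Omega)^2} + \|\U - \mathcal{J}\U\|_{L^2(\Omega)^2})$. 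Applying \eqref{strongmono} to the left side, \eqref{lipschitz} to the nonlinear remainder in $R$, and Young's inequality to absorb $\|\G - \G_h\|$-contributions, together with \eqref{Ierror}--\eqref{Jerror}, delivers the $O(h^{k+1})$ bound on $\|\G - \G_h\|_{L^2(\Omega)^2}$.

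The $\|\U - \U_h\|$-estimate then follows from the triangle inequality and the auxiliary $\|e_\U\|$-bound. For $\|u - u_h\|$, I would use that the first error equation gives $b_h^{\ast}(e_u,\V_h) = \int_{\Omega}(\G - \G_h)\cdot \V_h\,dx \leq \|\G - \G_h\|_{L^2(\Omega)^2}\|\V_h\|_{X'}$, so the second inf-sup of Lemma~\ref{binfsup} with $b_h=b_h^{\ast}$ and Lemma~\ref{L2leqZnorm} yield $\|e_u\|_{L^2(\Omega)} \leq C\|\G - \G_h\|_{L^2(\Omega)^2}$. The main obstacle I anticipate is orchestrating the test-function combination so that the staggered orthogonality removes every mixed term between $e_u$ and $e_\U$ cleanly, leaving a right-hand side in which the Lipschitz remainder $\int_{\Omega}[\rho(\G)\G - \rho(\G_h)\G_h]\cdot(\mathcal{J}\G - \G)\,dx$ and the $\|e_\U\|$-coupling are both tamed by \eqref{lipschitz} and absorbed via Young's inequality into the monotone coercive term.
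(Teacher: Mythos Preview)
Your proposal is correct and mirrors the paper's proof: the same test-function choices for stability, the same inf-sup/Poincar\'e chain to control $\|u_h\|$ by $\|\G_h\|$, and the same consistency--interpolant-orthogonality--monotonicity--Lipschitz sequence for convergence. The only cosmetic difference is that the paper applies strong monotonicity directly to $\mathcal{J}\G-\G_h$ after first extracting the orthogonality $\int_\Omega(\G-\G_h)\cdot(\mathcal{J}\U-\U_h)\,dx=0$ from the first and third error equations alone, which lets it skip the explicit Young-absorption step you sketch.
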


\begin{proof}
We start by showing the stability estimate.
Taking $\mf{\W_h} = \G_h$, $\mf{\V_h}=\textbf{\emph{U}}_h$, $\mf{v_h}=u_h$ in \eqref{sdg1}--\eqref{sdg3},
summing and applying \mf{\eqref{beq}}, we have
\begin{equation}
	\int_{\Omega} \rho(\G_h)\G_h \cdot \G_h dx = \int_{\Omega} f u_h \, dx.
\end{equation}
%
%
Applying the Cauchy Schwarz inequality,
\begin{equation}
  \Vert \G_h\Vert _{L^2(\Omega)^2}^2
  \leq\rho_0^{-1}\Vert f\Vert _{L^2(\Omega)}\Vert u_h\Vert _{L^2(\Omega)}.  \label{stab0}
\end{equation}
Besides, using Lemma \ref{L2leqZnorm} and Lemma \ref{binfsup},
\begin{eqnarray}
	\Vert u_h\Vert _{L^2(\Omega)}
		\leq C\sup_{\V\in\Wh}\frac{b_h(\V,u_h)}{\Vert \V\Vert _{X'}}.\label{stab1}
\end{eqnarray}
Besides, using equations \eqref{beq} and \mf{\eqref{sdg1}}, we have for any $\V \in \Wh$
\begin{eqnarray}
	b_h(\V,u_h) = \int_{\Omega} \G_h \cdot \V\, dx \leq \norm{\G_h}_{L^2(\Omega)^2} \norm{\V}_{L^2(\Omega)^2}. \label{stab2}
\end{eqnarray}
Combining \eqref{stab1} and \eqref{stab2} and applying \eqref{L2leqX'},
\begin{eqnarray}
	\Vert u_h\Vert _{L^2(\Omega)}&\leq C\Vert \G_h\Vert_{L^2(\Omega)}.
\end{eqnarray}
Combining this with \eqref{stab0},
\begin{eqnarray}
	\Vert \G_h\Vert _{L^2(\Omega)^2}&\leq C\Vert f\Vert_{L^2(\Omega)},
\end{eqnarray}
and the stability estimate \eqref{stability} follows from the Lipschitz continuity \eqref{lipschitz}.

Next, we show the convergence of $\G$.
Note that \eqref{sdg1} and \eqref{sdg3} still holds if we replace $\G_h$ by $\G$, $\U_h$ by $\U$ and $u_h$ by $u$.
Therefore,
\begin{eqnarray}
        \int_{\Omega}(\G-\G_h)\cdot \V \, dx -b^{\ast}_h(u-u_h,\V)&=0\quad\forall \V\in\Wh, \label{conv0}\\
        b_h(\U-\U_h,v)&=0\quad\forall v\in\Uh.
\end{eqnarray}
Using the properties of $\mathcal{I}$ and $\mathcal{J}$,
\begin{eqnarray}
        \int_{\Omega}(\G-\G_h)\cdot \V\, dx-b^{\ast}_h(\mathcal{I}u-u_h,\V)&=&0\quad\forall \V\in\Wh,\\
        b_h(\mathcal{J}\U-\U_h,v)&=&0\quad\forall v\in\Uh.
\end{eqnarray}
In particular for $v=\mathcal{I}u-u_h$ and $\V=\mathcal{J}\U-\U_h,$ adding these two equations gives
\begin{eqnarray}
	\int_{\Omega}(\G-\G_h)\cdot(\mathcal{J}\U-\U_h)\, dx = 0. \label{conv1}
\end{eqnarray}
On the other hand, from the strong monotonicity \eqref{strongmono},
\begin{eqnarray}
\norm{\mathcal{J}\G-\G_h}^2_{L^2(\Omega)^2} \leq \int_{\Omega}(\mathcal{J}\G-\G_h)\cdot(\mathcal{J}\U-\U_h)\, dx.
\end{eqnarray}
Applying equation \eqref{conv1},
\begin{eqnarray}
\begin{split}
\norm{\mathcal{J}\G-\G_h}^2_{L^2(\Omega)^2} &\leq \int_{\Omega}(\mathcal{J}\G-G)\cdot(\mathcal{J}\U-\U_h)\, dx \\
&\leq C \norm{\mathcal{J}\G-\G}_{L^2(\Omega)^2} \norm{\mathcal{J}\U-\U_h}_{L^2(\Omega)^2}.
\end{split}
\end{eqnarray}
Applying the Lipschitz continuity \eqref{lipschitz},
\begin{eqnarray}
\norm{\mathcal{J}\G-\G_h}_{L^2(\Omega)^2}
&\leq C\norm{\mathcal{J}\G-\G}_{L^2(\Omega)^2}.
\end{eqnarray}
Hence applying \eqref{Jerror},
\begin{equation}
    \begin{split}
      \norm{\G-\G_h}_{L^2(\Omega)^2}&\leq \norm{\G-\mathcal{J}\G}_{L^2(\Omega)^2}+\norm{\mathcal{J}\G-\G_h}_{L^2(\Omega)^2}\\
      &\leq Ch^{k+1}\norm{\G}_{H^{k+1}(\Omega)^2}.
      \end{split}
\end{equation}

Then we show the convergence of $u$. Using equation (\ref{Ierror}),
\begin{equation}
    \begin{split}
      \Vert u-u_h\Vert _{L^2(\Omega)}&\leq \Vert u-\mathcal{I}u\Vert _{L^2(\Omega)}+\Vert \mathcal{I}u-u_h\Vert _{L^2(\Omega)}\\
      &\leq Ch^{k+1}\Vert u\Vert _{H^{k+1}(\Omega)}+\Vert \mathcal{I}u-u_h\Vert _{L^2(\Omega)}.
      \end{split}
\end{equation}
Using the inf-sup condition in Lemma \ref{binfsup}, equation \eqref{beq}, \eqref{bintepI}, \eqref{conv0} and \eqref{L2leqX'},
\begin{equation}
      \begin{split}
        \Vert \mathcal{I}u-u_h\Vert _{L^2(\Omega)}&\leq C\sup_{\V\in\Wh} \frac{b_h(\V,\mathcal{I}u-u_h)}{\Vert \V\Vert _{X'}}
                                                          = C\sup_{\V\in\Wh} \frac{b_h^{\ast}(\mathcal{I}u-u_h,\V)}{\Vert \V\Vert _{X'}}\\
                                                          &= C\sup_{\V\in\Wh} \frac{b_h^{\ast}(u-u_h,\V)}{\Vert \V\Vert _{X'}}
                                                          = C\sup_{\V\in\Wh} \frac{\int_{\Omega}(\G-\G_h)\cdot\V\, dx}{\Vert \V\Vert _{X'}}\\
                                                          &\leq C\norm{\G-\G_h}_{L^2(\Omega)^2},
      \end{split}
\end{equation}
which shows the convergence of $u_h$. The convergence of $\U_h$ follows from the Lipschitz continuity \eqref{lipschitz}.
\end{proof}

\section{Numerical examples}
In this section, we present some numerical examples and verify the convergence rate of our SDG method.
Moreover, we will obtain a postprocessed solution $u_h^*$ which converges with higher order than $u_h$.
We define the postprocessed solution $u_h^*$ as follows.
For each $\tau\in\mathcal{T}$, we take $u_h^* \in P^{k+1}(\tau)$ determined by
\begin{equation}
\int_{\tau} \nabla u^*_h \cdot \nabla w \, dx  =  \int_{\tau} \mf{\G_h} \cdot \nabla w\, dx, \quad \forall w \in P^{k+1}(\tau)^0
\end{equation}
\begin{equation}
\int_{\tau} u_h^* \, dx  =  \int_{\tau} u_h \, dx,
\end{equation}
	where $P^{k+1}(\tau)^0 := \left\lbrace w\in P^{k+1}(\tau): \int_{\tau} w\, dx = 0 \right\rbrace$. See \cite{cockburn2009superconvergent}.
	
For all of our numerical examples, We consider square domain $\Omega = [0,1]^2$.
We divide this domain into $N \times N$ squares
and divide each square into two triangles.
We use this as our initial triangulation $\mathcal{T}_u$
and further subdivide each triangle taking the interior points as the centroids of the triangles following the discussion in Section 2. We take the mesh size $h:=1/N$. We illustrate the mesh with $h=1/4$ in Fig.~\ref{mesh}.
\begin{figure}[h]
\centering
\includegraphics[width=2in]{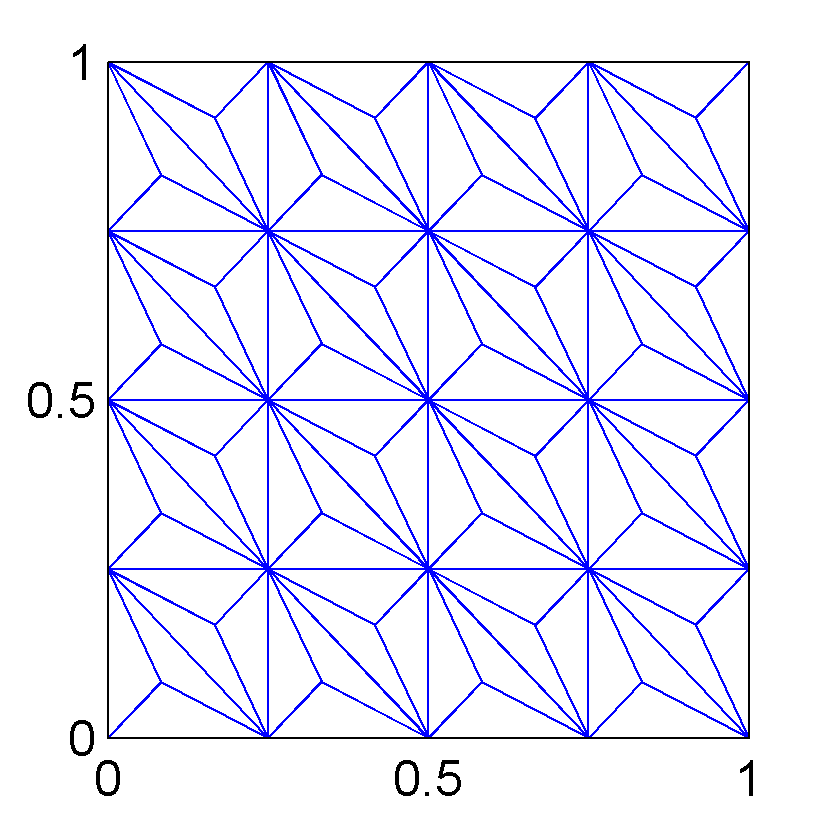}%
\caption{Triangulation on $\Omega = [0,1]^2$ with mesh size 1/4.}
\label{mesh}
\end{figure}
We consider the following solutions of equation \eqref{pblm}.
\begin{equation*}
  \begin{split}
    u_1(x,y) &= \sin(\pi x)\sin(\pi y),\\
    u_2(x,y) &= 10xy^2(1-x)(1-y)-\frac{\E^{x-1}\sin(\pi x)\sin(\pi y)}{2}.\\
  \end{split}
\end{equation*}
All these solutions have zero value on the boundary of $\Omega$.
We also consider the following six nonlinear coefficients to test the order of convergence.
\begin{align*}
\rho_1(\nabla u) & :=  2+\dfrac{1}{1+|\nabla u|} &
\rho_2(\nabla u) & :=  1+\exp(-|\nabla u|) \\
\rho_3(\nabla u) & :=  1+\exp(-|\nabla u|^2) &
\rho_4(\nabla u) & :=  \dfrac{1}{\sqrt{1+|\nabla u|}} \\
\rho_5(\nabla u) & :=  |\nabla u| &
\rho_6(\nabla u) & :=  |\nabla u|^2
\end{align*}

For each $u_j$ and $\rho_{\ell}$, we choose $f$ in \eqref{pblm} and
solve for the approximate solution in the spaces of piecewise linear polynomial (i.e. $k=1$), using Newton's iteration.
We terminate the Newton's iteration when the successive error is less than $\delta = 10^{-10}$.
Let $u_{j,h}$ be the approximate solution obtained from this Newton's iteration, and $u_{j,h}^*$ be the solution obtained from applying the above postprocessing procedure to $u_{j,h}$.
Under different choice of nonlinear coefficients $\rho_{\ell}$, we compute the $L^2$ error for $u_{j,h}$ and $u_{j,h}^*$, given by
$\Vert u_j-u_{j,h}\Vert _{\ltwo}$ and $\Vert u_j-u^*_{j,h}\Vert _{\ltwo}$, respectively. The results are listed in Table \ref{table_u1}
and Table \ref{table_u2}.
From these results, we see clearly that the scheme gives optimal rate of convergence for the numerical solution
and superconvergence for the postprocessed solution.

\begin{table}[ht]
\begin{tabular}{ccccccc}
Coefficient & Mesh size  & $||u_1-\mf{u_{1,h}}||_{L^2(\Omega)}$ & order & $||u_1-\mf{u_{1,h}^\ast}||_{L^2(\Omega)}$ & order & Number of iterations \\ \hline
\multirow{5}{*}{$\rho_1$}& $1/4$ & 3.54e-2 & - & 2.86e-3 & - & 4 \\
& $1/8$ & 9.24e-3 & 1.94 & 3.71e-4 & 2.95 & 4 \\
& $1/16$ & 2.34e-3 & 1.98 & 4.70e-5 & 2.98 & 4 \\
& $1/32$ & 5.86e-4 & 2.00 & 5.91e-6 & 2.99 & 4 \\
& $1/64$ & 1.46e-4 & 2.00 & 7.40e-7 & 3.00 & 4 \\ \hline
\multirow{5}{*}{$\rho_2$}& $1/4$ & 3.50e-2 & - & 3.00e-3 & - & 4 \\
& $1/8$ & 9.23e-3 & 1.92 & 3.95e-4 & 2.82 & 4 \\
& $1/16$ & 2.34e-3 & 1.98 & 5.07e-5 & 2.96 & 4 \\
& $1/32$ & 5.86e-4 & 2.00 & 6.45e-6 & 2.98 & 4 \\
& $1/64$ & 1.46e-4 & 2.00 & 8.13e-7 & 2.99 & 4 \\ \hline
\multirow{5}{*}{$\rho_3$}& $1/4$ & 3.78e-2 & - & 4.31e-3 & - & 5 \\
& $1/8$ & 9.41e-3 & 2.01 & 5.46e-4 & 2.98 & 5 \\
& $1/16$ & 2.34e-3 & 2.01 & 5.81e-5 & 3.23 & 5 \\
& $1/32$ & 5.86e-4 & 2.00 & 7.67e-6 & 2.92 & 5 \\
& $1/64$ & 1.46e-4 & 2.00 & 9.84e-7 & 2.96 & 5 \\ \hline
\multirow{5}{*}{$\rho_4$} & $1/4$ & 3.50e-2 & - & 3.13e-3 & - & 4 \\
& $1/8$ & 9.21e-3 & 1.93 & 4.12e-4 & 2.93 & 5 \\
& $1/16$ & 2.34e-3 & 1.98 & 5.30e-5 & 2.96 & 5 \\
& $1/32$ & 5.86e-4 & 2.00 & 6.74e-6 & 2.98 & 5 \\
& $1/64$ & 1.46e-4 & 2.00 & 8.49e-7 & 2.99 & 5 \\ \hline
\multirow{5}{*}{$\rho_5$} & $1/4$ & 3.60e-2 & - & 3.32e-3 & - & 6 \\
& $1/8$ & 9.29e-3 & 1.95 & 5.42e-4 & 2.62 & 6 \\
& $1/16$ & 2.34e-3 & 1.99 & 8.34e-5 & 2.70 & 7 \\
& $1/32$ & 5.86e-4 & 2.00 & 1.20e-6 & 2.79 & 8 \\
& $1/64$ & 1.47e-4 & 2.00 & 1.67e-7 & 2.85 & 8 \\ \hline
\multirow{5}{*}{$\rho_6$} & $1/4$ & 3.56e-2 & - & 5.98e-3 & - & 10 \\
& $1/8$ & 9.29e-3 & 1.94 & 1.50e-3 & 2.00 & 10 \\
& $1/16$ & 2.34e-3 & 1.99 & 2.28e-4 & 2.72 & 14 \\
& $1/32$ & 5.86e-4 & 2.00 & 3.18e-5 & 2.84 & 20 \\
& $1/64$ & 1.47e-4 & 2.00 & 4.29e-6 & 2.89 & 27 \\ \hline
\end{tabular}
\caption{The $L^2$ error for $\mf{u_{1,h}}$ and $\mf{u_{1,h}^*}$ under difference choices of coefficients.}
\label{table_u1}
\end{table}

\begin{table}[ht]
\begin{tabular}{ccccccc}
Coefficient & Mesh size  & $||u_2-\mf{u_{2,h}}||_{L^2(\Omega)}$ & order & $||u_2-\mf{u_{2,h}^\ast}||_{L^2(\Omega)}$ & order & Number of iterations \\ \hline
\multirow{5}{*}{$\rho_1$} & $1/4$ & 1.46e-2 & - & 1.78e-3 & - & 5 \\
& $1/8$ & 3.91e-3 & 1.90 & 2.40e-4 & 2.88 & 5 \\
& $1/16$ & 9.92e-4 & 1.98 & 3.11e-5 & 2.95 & 5 \\
& $1/32$ & 2.49e-4 & 1.99 & 3.94e-6 & 2.98 & 5 \\
& $1/64$ & 6.24e-5 & 2.00 & 5.00e-7 & 2.98 & 5 \\ \hline
\multirow{5}{*}{$\rho_2$} & $1/4$ & 1.45e-2 & - & 1.72e-3 & - & 5 \\
& $1/8$ & 3.90e-3 & 1.90 & 2.32e-4 & 2.89 & 5 \\
& $1/16$ & 9.91e-4 & 1.98 & 3.04e-5 & 2.93 & 5 \\
& $1/32$ & 2.45e-4 & 1.99 & 3.82e-6 & 2.99 & 5 \\
& $1/64$ & 6.24e-5 & 2.00 & 4.94e-7 & 2.95 & 5 \\ \hline
\multirow{5}{*}{$\rho_3$} & $1/4$ & 1.40e-2 & - & 1.90e-3 & - & 6 \\
& $1/8$ & 3.94e-3 & 1.83 & 2.58e-4 & 2.88 & 6 \\
& $1/16$ & 9.94e-4 & 1.99 & 3.22e-5 & 3.00 & 6 \\
& $1/32$ & 2.49e-4 & 1.99 & 4.19e-6 & 2.94 & 6 \\
& $1/64$ & 6.24e-5 & 2.00 & 5.33e-7 & 2.97 & 6 \\ \hline
\multirow{5}{*}{$\rho_4$} & $1/4$ & 1.45e-2 & - & 1.71e-3 & - & 4 \\
& $1/8$ & 3.90e-3 & 1.89 & 2.31e-4 & 2.89 & 4 \\
& $1/16$ & 9.91e-4 & 1.98 & 3.03e-5 & 2.93 & 4 \\
& $1/32$ & 2.49e-4 & 1.99 & 3.79e-6 & 3.00 & 4 \\
& $1/64$ & 6.24e-5 & 2.00 & 4.89e-7 & 2.95 & 4 \\ \hline
\multirow{5}{*}{$\rho_5$} & $1/4$ & 1.49e-2 & - & 3.97e-3 & - & 7 \\
& $1/8$ & 3.94e-3 & 1.92 & 6.05e-4 & 2.71 & 8 \\
& $1/16$ & 9.99e-4 & 1.98 & 9.24e-5 & 2.71 & 8 \\
& $1/32$ & 2.50e-4 & 2.00 & 1.32e-5 & 2.81 & 10 \\
& $1/64$ & 6.25e-5 & 2.00 & 1.79e-6 & 2.88 & 10 \\ \hline
\multirow{5}{*}{$\rho_6$} & $1/4$ & 1.54e-2 & - & 6.54e-3 & - & 13 \\
& $1/8$ & 3.91e-3 & 1.98 & 1.17e-3 & 2.49 & 15 \\
& $1/16$ & 9.94e-4 & 1.98 & 1.96e-4 & 2.58 & 18 \\
& $1/32$ & 2.50e-4 & 1.99 & 2.92e-5 & 2.74 & 21 \\
& $1/64$ & 6.24e-5 & 2.00 & 3.91e-6 & 2.90 & 23 \\ \hline
\end{tabular}
\caption{The $L^2$ error for $u_{2,h}$ and $u_{2,h}^*$ under difference choices of coefficients.}
\label{table_u2}
\end{table}


\section*{Acknowledgement}

The research of Eric Chung is partially supported by Hong Kong RGC General Research Fund (Project: 14301314).

\bibliography{ref}{}
\bibliographystyle{abbrv}

\end{document}